\newtheorem{lemma}{Lemma}[section]
\newtheorem{thm}[lemma]{Theorem}
\newtheorem{cor}[lemma]{Corollary}
\theoremstyle{definition}
\newtheorem{defn}[lemma]{Definition}
\theoremstyle{definition}
\definecolor{darkgreen}{cmyk}{1,0,1,.2}
\newcommand{\calC} {\ensuremath {\mathcal{C}}}
\newcommand{\calH} {\ensuremath {\mathcal{H}}}
\newcommand{\calD} {\ensuremath {\mathcal{D}}}
\address{Department of Mathematics, ETH Zurich, 8092 Zurich, Switzerland}
\email{sisto@math.ethz.ch}
\begin{document}

\title{Many Haken Heegaard splittings}
\author{Alessandro Sisto}

 \begin{abstract}
We give a simple criterion for a Heegaard splitting to yield a Haken manifold. As a consequence, we construct many Haken manifolds, in particular homology spheres, with prescribed properties, namely Heegaard genus, Heegaard distance and Casson invariant.

Along the way we give simpler and shorter proofs of the existence of splittings with specified Heegaard distance, originally proven by Ido-Jang-Kobayashi, of the existence of hyperbolic manifolds with prescribed Casson invariant, originally due to Lubotzky-Maher-Wu, and of a result about subsurface projections of disc sets (for which we even get better constants), originally due to Masur-Schleimer.
 \end{abstract}

\maketitle

\section{Introduction}
Every closed connected oriented $3$--manifold admits a Heegaard splitting, meaning that it can be obtained by gluing two handlebodies along their boundaries. It is therefore interesting to understand what kind of information one can extract about the $3$--manifold from the gluing map that defines one of its Heegaard splittings. For example, in a seminal work Hempel \cite{Hempel:Heegaard} proved that if a $3$--manifold is Seifert fibered or contains an incompressible torus then, for any Heegaard splitting, the distance between the disc sets of the handlebodies in the curve graph of their common boundary, which we will call Heegaard distance, is at most $2$. In particular, because of geometrisation, if a $3$--manifold admits a Heegaard spitting of distance at least $3$ then it is hyperbolic.

In this paper, we study how the property of being Haken can be read off the data coming from a Heegaard splitting.

 (Recall that the surface $S$ embedded in the $3$--manifold $M$ admits a \emph{compression disc} if there exists an embedding $f$ of the $2$--disc $D^2$ into $M$ so that $f(\partial D^2)$ is an essential loop in $S$ and $f(\mathring{D^2})\subseteq M-S$. A closed connected oriented $3$--manifold is \emph{Haken} if it is irreducible and it contains an incompressible surface, i.e. an embedded connected orientable surface not homeomorphic to the $2$--sphere that does not admit a compression disc.)
 
 We give a simple criterion for a Heegaard splitting to yield a Haken manifold (Theorem \ref{thm:steady_incompressible}). Roughly speaking, the criterion applies when there is a tight geodesic connecting the disc sets along which there are large subsurface projections at every point. We will provide an explicit construction of an embedded surface (see Definition \ref{defn:steadysurface}) starting from a path in the curve graph with certain properties (specified in Definition \ref{defn:steadypath}), and then prove that such surface does not admit compression discs.
 
 It is relatively easy to construct splittings so that the criterion applies, and in fact there is a lot of flexibility in doing so. In particular, we can construct Haken manifolds, and in particular Haken homology spheres, that satisfy a rather long list of prescribed properties:
 
%
%
 \begin{thm}\label{thm:manyhaken}
  Let $g$, $n$ and $k$ be integers and suppose that either $g,n\geq 3$. Then there exists a closed oriented $3$--manifold with the following properties:
  \begin{itemize}
   \item $M$ is Haken,
   \item $M$ is an integer homology sphere,
   \item $M$ is hyperbolic,
   \item $M$ has a Heegaard splitting of genus $g$ and Heegaard distance $n$,
   \item $M$ has Casson invariant $k$.   
  \end{itemize}
 \end{thm}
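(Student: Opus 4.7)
The plan is to build a Heegaard splitting whose gluing realises a steady path of length $n$ (applying Theorem~\ref{thm:steady_incompressible} to get Hakenness together with Heegaard distance~$n$), and then to use the residual flexibility in the gluing to force the integer homology sphere condition and the prescribed Casson invariant. Hyperbolicity is then automatic: as recalled in the introduction, Heegaard distance $n\geq 3$ combined with Hempel's theorem rules out Seifert fibered pieces and incompressible tori, so geometrisation delivers hyperbolicity, and this is compatible with being Haken.

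First I would produce a tight geodesic $\g_0,\dots,\g_n$ of length $n$ in the curve graph of a closed genus-$g$ surface $\Sigma$, with $\g_0$ bounding a disc in some handlebody $H_1$ and $\g_n$ bounding a disc in some handlebody $H_2$ (both with $\partial H_i=\Sigma$), and with intermediate subsurface projections large enough for the path to satisfy Definition~\ref{defn:steadypath}. A standard recipe, familiar from constructions realising prescribed Heegaard distance, is to compose pseudo-Anosov mapping classes of sufficiently large translation length in $\calC(\Sigma)$ and in every relevant subsurface curve graph, and take the orbit of a disc-bounding curve. One then defines the gluing $\phi$ so that it carries a compression disc system of $H_1$ to a system whose boundary contains $\g_n$. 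The resulting splitting has genus $g$ and distance exactly $n$, and yields a Haken manifold by Theorem~\ref{thm:steady_incompressible}.

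For the remaining two properties I would exploit that the steady path uses only finitely many curves, so one can find an essential subsurface $Y\subset\Sigma$ disjoint from all of them (the hypothesis $g\geq 3$ should create the necessary room; otherwise one absorbs an extra handle). Any mapping class $\psi$ supported in $Y$ fixes the path pointwise and preserves every subsurface projection relevant for Definition~\ref{defn:steadypath}, so replacing $\phi$ by $\psi\phi$ preserves Hakenness, distance, and hence hyperbolicity. Inside the mapping class group of $Y$ I would first choose $\psi_1$ so that $(\psi_1\phi)_*$ sends the meridian lattice of $H_1$ onto a lattice transverse to the meridian lattice of $H_2$ in $H_1(\Sigma;\Z)$, guaranteeing $H_1(M;\Z)=0$. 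Then, staying inside the Torelli subgroup of $Y$ so that the homology sphere condition is preserved, I would compose with a further $\psi_2$ to shift the Casson invariant by any prescribed integer, using Morita's formula for the variation of the Casson invariant under a Heegaard regluing and the fact that a Dehn twist along a suitable bounding pair in $Y$ shifts it by a fixed nonzero amount; iterating spans all of $\Z$.

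The main obstacle I anticipate is the last paragraph: arranging enough room in $Y$ to carry both the $H_1$-correcting move and a Casson-shifting bounding pair, all while remaining disjoint from every curve of the steady path and compatible with a bounded genus $g$. This is essentially the input from the Lubotzky--Maher--Wu argument, which the paper announces it will reprove more simply; once one exhibits in $Y$ a single bounding pair whose twist changes the Casson invariant nontrivially, combining with the Torelli-theoretic correction of the homology is a bookkeeping exercise that does not interfere with the Hakenness and distance conclusions already guaranteed by the steady path.
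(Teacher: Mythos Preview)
Your plan has a genuine gap in the third paragraph. You claim that because the steady path uses only finitely many curves, one can find an essential subsurface $Y\subset\Sigma$ disjoint from all of them. This is false whenever $n\geq 3$: two curves at distance at least $3$ in $\calC(\Sigma)$ fill $\Sigma$, so already $\g_0$ and $\g_3$ admit no common disjoint essential subsurface. The suggestion to ``absorb an extra handle'' would change the genus and hence violate the conclusion that the splitting has genus exactly $g$. Consequently the entire strategy of post-composing with a $\psi$ supported in such a $Y$, in order to correct $H_1$ and the Casson invariant without disturbing the steady path, cannot get off the ground.

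The paper avoids this by building the homology-sphere and Casson constraints into the gluing from the outset rather than correcting afterwards. One starts from a fixed gluing $\iota$ of $S^3$ that factors as $\iota_1\iota_2$ with $\iota_i\in Stab(\sigma_i)$ for two disjoint curves $\sigma_0,\sigma_1$. One then constructs, for each $i$, a partial pseudo-Anosov $\phi_i\in\mathcal K\cap Stab(\sigma_i)$ (here $\mathcal K$ is the group generated by separating Dehn twists) with $J(\phi_i)=1$ for Morita's homomorphism $J:\mathcal K\to\Z$, and with large translation length on $\mathcal{AC}(\Sigma_g-N(\sigma_i))$ and projection far from the disc set (this last step uses the Masur--Schleimer result, Corollary~\ref{cor:MasurSchleimer}). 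The gluing is then $\psi=(\phi_1^{k_1}\iota_1)(\phi_2^{k_2}\iota_2)\prod_{i\geq 3}\phi_i^{k_i}$, and the steady path is simply the orbit $t_i=\psi_{i-1}\sigma_i$. Since $\psi=\iota\phi$ with $\phi\in\mathcal K$, the manifold is automatically an integer homology sphere and its Casson invariant is $J(\phi)$, which one adjusts to $k$ by choosing the exponents $k_i$. Thus the Casson and homology control are woven into the same mapping classes that produce the large subsurface projections, rather than being imposed on a disjoint support.
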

 
 We emphasize that various subsets of those properties were not known to be simultaneously realisable. In fact, for example, the first construction of splittings of given Heegaard distance is given in \cite{Heegaard_distance_n}, but these are not guaranteed to be neither Haken nor homology spheres. (There is a construction of Haken manifolds with splittings of arbitrarily large distance \cite{Heegaard_distance_at_least_n}, see also \cite{high_distance_knots}, but such manifolds have positive first Betti number.)  Also, the only previously known construction of hyperbolic manifolds with given Casson invariant is the one in \cite{LubotzkyMaherWu}, where the authors do not obtain precise control on the Heegaard distance and do not show whether their manifolds are Haken or not. In fact, our construction is shorter and simpler than the ones in either of these papers, especially \cite{LubotzkyMaherWu}, which uses probabilistic methods.
 
 Along the way, see Corollary \ref{cor:MasurSchleimer}, we also improve the bounds and give a simpler proof of a useful result from \cite{MasurSchleimer} about subsurface projections of disc sets.

 \subsection*{Acknowledgement} This paper would have not been possible without the contribution of Saul Schleimer, who provided precious suggestions and insights about the construction of the mapping classes in Section \ref{sec:constr}.
 
 The author would also like to especially thank Jeff Brock for the discussions that lead to the idea for constructing incompressible surfaces, as well as Sebastian Hensel, Joseph Maher, Kasra Rafi and Juan Souto for interesting discussions.
 
 This material is based upon work supported by the National Science Foundation under grant No. DMS-1440140 while the author was in residence at the Mathematical Sciences Research Institute in Berkeley, California, during the Fall 2016 semester.

\section{Steady paths}

\subsection{Background and conventions}
We denote by $\Sigma_g$ the closed connected oriented surface of genus $g$, and we will always assume $g\geq 2$.

For short, we will write ``curve'' instead of ``essential simple closed curve'' (recall that a curve is essential if it does not bound a disc or a once-punctured disc). The curve graph $\mathcal C(\Sigma_g)$ of $\Sigma_g$ is the graph whose vertices are isotopy classes of curves on $\Sigma_g$ and where two curves are connected by an edge if and only if they have disjoint representatives. With an abuse of notation, when discussing properties of a set of curves we will implicitly assume that they are in minimal position, and when referring to the distance between two curves in $\mathcal C(\Sigma_g)$ we will mean the distance between their isotopy classes. 

We say that a subsurface of $\Sigma_g$ is non-sporadic if it is not a sphere with at most 4 discs removed or a torus with one disc removed. Similarly to the above, given an essential non-sporadic subsurface $Y$ of $\Sigma_g$ we denote by $\mathcal{AC}(Y)$ the graph whose vertices are isotopy classes of curves and essential simple arcs in $Y$ (an arc is essential if it is does not cut out a disc), with distance defined as above. For sporadic surfaces the definitions need to be adjusted; we do not recall them here since this does not play a big role in this paper, and we refer the reader to \cite{MasurMinsky:I}.

Curve graphs, as well as arc and curve graphs, are Gromov-hyperbolic \cite{MasurMinsky:I}, see also \cite{Aougab:uniform,Bowditch:uniform,ClayRafiSchleimer:uniform,HenselPrzytyckiWebb:unicorn,PrzytyckiSisto:universe}. (This fact only plays a minor role in this paper.)

A multicurve is a collection of disjoint pairwise non-isotopic curves. Given a multicurve $c$, we denote $N(c)$ an open regular neighborhood.

\subsubsection{Subsurface projections} We now recall some properties of subsurface projections. The statement and proof of the criterion for being Haken do not rely on this notion, but the construction of disc sets where the criterion applies does. To the best of the author's knowledge, the interaction between subsurface projections and Heegaard splittings was first studied in \cite{JMM:subsurface_proj_Heegard}.

For $Y$ an essential non-sporadic subsurface of $\Sigma_g$ and a curve $c$ in $\Sigma_g$, the subsurface projection $\pi_Y(c)\subseteq \mathcal{AC}(Y)$ is obtained as follows. First, one isotopes $c$ so that it intersects $\partial Y$ minimally. Then, one considers all connected components of $c\cap Y$, and defines $\pi_Y(c)$ as the set of all isotopy classes of arcs and curves that they represent. This is a set of diameter at most $1$ in $\mathcal{AC}(Y)$. We will write $d_{\mathcal{AC}(Y)}(c,c')$ for $d_{\mathcal{AC}(Y)}(\pi_Y(c),\pi_Y(c'))$.

One of the fundamental facts about surface projection, and one that we will use repeatedly, is the Bounded Geodesic Image Theorem:

\begin{thm}[Bounded Geodesic Image Theorem, \cite{MasurMinsky:II}, see also \cite{Webb:BGI}]
There exists $C\geq 0$ with the following property. Let $Y\subseteq Z$ be essential subsurfaces of $\Sigma_g$. If $c_0,\dots,c_n$ are curves that form a geodesic in $\mathcal{AC}(Z)$ and $\pi_Y(c_i)$ is non-empty for every $Y$, then $d_{\mathcal AC(Y)}(c_0,c_n)\leq C$. 
\end{thm}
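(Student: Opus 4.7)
The plan is to argue by contradiction: assume there exists a geodesic $c_0,\dots,c_n$ in $\mathcal{AC}(Z)$ such that every $\pi_Y(c_i)$ is non-empty but $d_{\mathcal{AC}(Y)}(c_0,c_n)$ exceeds a constant $D$ chosen large in terms of the hyperbolicity constants of the arc-and-curve graphs involved. I will aim to show that $D$ cannot be arbitrarily large, which yields the desired universal bound $C$.

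The first step is the elementary observation that consecutive curves $c_i$ and $c_{i+1}$ are disjoint in $Z$, so since both have non-empty projection to $Y$ one has $d_{\mathcal{AC}(Y)}(\pi_Y(c_i),\pi_Y(c_{i+1}))\leq 2$ (the intersections with $Y$ admit disjoint representatives). Therefore $\pi_Y(c_0),\dots,\pi_Y(c_n)$ forms a $2$-Lipschitz path in the Gromov-hyperbolic space $\mathcal{AC}(Y)$; since its endpoints are at distance $>D$, by stability of quasi-geodesics in hyperbolic spaces this path must fellow-travel any geodesic joining its endpoints.

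The second step is to select a vertex $v$ on such a geodesic from $\pi_Y(c_0)$ to $\pi_Y(c_n)$, located at distance $\sim D/2$ from each endpoint. By the fellow-travelling above, some $\pi_Y(c_i)$ lies within a uniform distance of $v$. I would then lift $v$ to a curve $\tilde v$ in $\Sigma_g$: if $v$ is already a curve take $\tilde v = v$, and if $v$ is an arc take a boundary component of a regular neighborhood of $v\cup\partial Y$. The resulting $\tilde v$ is a curve in $Z$ disjoint from $\partial Y$, whose projection to $Y$ lies within a universal constant of $v$ in $\mathcal{AC}(Y)$, so $\pi_Y(\tilde v)$ is still far from both $\pi_Y(c_0)$ and $\pi_Y(c_n)$.

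The main obstacle, and the heart of the theorem, is to transfer this information back to $\mathcal{AC}(Z)$ to contradict geodesicity of $c_0,\dots,c_n$. Concretely, I would want to bound $d_{\mathcal{AC}(Z)}(\tilde v,c_0)$ and $d_{\mathcal{AC}(Z)}(\tilde v,c_n)$ by a constant depending only on the hyperbolicity constant, which via the triangle inequality would force $n$ to be small and hence contradict the unbounded choice of $D$. The difficulty is that subsurface projection is only coarsely Lipschitz for pairs with non-empty projection, so proximity in $\mathcal{AC}(Y)$ does not immediately translate to proximity in $\mathcal{AC}(Z)$: intermediate curves on a putative short path in $\mathcal{AC}(Z)$ may be disjoint from $Y$, creating ``shortcuts'' around $\partial Y$ that have to be absorbed into the constant. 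Overcoming this requires either induction on the complexity $\xi(Y)$ (using the theorem for simpler subsurfaces of $Y$), a hierarchical construction in the style of Masur-Minsky, or a unicorn-path argument in the style of Webb; I expect this technical closure, together with the separate annulus base case, to be the main effort.
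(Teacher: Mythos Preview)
The paper does not prove this theorem: it is quoted as a background result from \cite{MasurMinsky:II} (with the reference to \cite{Webb:BGI} for an alternative proof) and is used as a black box throughout. There is therefore no ``paper's own proof'' to compare against.

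That said, your proposal has a genuine gap at the second step. You assert that because $\pi_Y(c_0),\dots,\pi_Y(c_n)$ is a $2$-Lipschitz path in the hyperbolic space $\mathcal{AC}(Y)$ with endpoints at distance $>D$, it must fellow-travel the geodesic between its endpoints. This is false: Lipschitz paths are not quasi-geodesics, and stability of quasi-geodesics does not apply. A $2$-Lipschitz path of combinatorial length $n$ can stray roughly $\delta\log n$ from the geodesic (this is the standard bottleneck estimate in $\delta$-hyperbolic spaces), and since nothing in your setup bounds $n$ in terms of $D$, you cannot locate any $\pi_Y(c_i)$ uniformly close to the midpoint $v$. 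Indeed, the whole content of the Bounded Geodesic Image Theorem is precisely that this projected path \emph{behaves} like a quasi-geodesic, so you cannot assume it at this stage without circularity.

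Your later diagnosis of the ``main obstacle'' is closer to the real difficulty, and you correctly identify that the actual proofs go via hierarchies (Masur--Minsky) or unicorn paths (Webb). But the argument as written does not reduce the theorem to that obstacle; the reduction itself already fails.
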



The following is a well-known easy consequence of the Bounded Geodesic Image Theorem.

\begin{lemma}\label{lem:concat_large_angle}
 There exists $K$ so that whenever $c_0,\dots,c_n$ is a sequence of curves in an essential subsurface $Y$ of $\Sigma_g$ where consecutive curves are disjoint and not isotopic, and $d_{\mathcal{AC}(\Sigma_g-N(c_i))}(c_{i-1},c_{i+1})\geq K$ for all $i=1,\dots, n-1$, then any geodesic in $\mathcal{AC}(Y)$ from $c_0$ to $c_n$ contains all the $c_i$.
\end{lemma}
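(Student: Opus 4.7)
The plan is to argue by contradiction via the Bounded Geodesic Image Theorem. Let $\gamma$ be a geodesic from $c_0$ to $c_n$ in $\mathcal{AC}(Y)$ and suppose some $c_i$ with $1\le i\le n-1$ fails to appear as a vertex of $\gamma$. Setting $W_i:=\Sigma_g-N(c_i)$, every vertex of $\gamma$ is non-isotopic to $c_i$ and so has non-empty projection to $W_i$, whence the Bounded Geodesic Image Theorem yields
$$d_{\mathcal{AC}(W_i)}(c_0,c_n)\le C,$$
where $C$ is the BGIT constant. A short preliminary disjointness chase, using only $K\ge 2$ and the hypothesis, rules out $c_j$ being isotopic to $c_i$ for $|j-i|$ bounded in terms of $K$, which ensures that the projections appearing below are well-defined.

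For the matching lower bound, I would use that if $a,b$ are disjoint in $\Sigma_g$ and both project non-trivially to $W_i$, then $d_{\mathcal{AC}(W_i)}(a,b)\le 1$ (the projections are themselves disjoint as arcs or curves). Applied to consecutive $c_j,c_{j+1}$ along the two pieces $c_0,\dots,c_{i-1}$ and $c_{i+1},\dots,c_n$, the triangle inequality combined with the hypothesis $d_{\mathcal{AC}(W_i)}(c_{i-1},c_{i+1})\ge K$ gives
$$d_{\mathcal{AC}(W_i)}(c_0,c_n)\ge K-(i-1)-(n-i-1)=K-n+2,$$
contradicting the BGIT upper bound as soon as $K>C+n-2$.

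The main obstacle I foresee is that this estimate loses $O(n)$, so the argument as stated only delivers a $K$ depending on $n$. To obtain a $K$ depending only on $\Sigma_g$, I would tighten the lower bound by invoking Behrstock's inequality on the overlapping pairs $W_i,W_k$: the large projection at each intermediate $W_k$ forces $\pi_{W_i}(c_j)$ to remain within a universal distance of $\pi_{W_i}(c_{i-1})$ when $j<i$ and of $\pi_{W_i}(c_{i+1})$ when $j>i$, uniformly in $|j-i|$. This would replace the two linear losses above by universal constants and yield a single $K$ depending only on the Bounded Geodesic Image and Behrstock constants for $\Sigma_g$.
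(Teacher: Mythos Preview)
Your first pass is sound as far as it goes: if some $c_i$ is missing from a geodesic $\gamma$ then BGIT bounds $d_{W_i}(c_0,c_n)\le C$, and the naive lower bound via the 1-Lipschitz property of projections gives $K-n+2$. But, as you yourself note, this only produces $K$ depending on $n$, so as written the proposal does not prove the lemma. Your suggested repair through Behrstock's inequality is reasonable in spirit, but you have not actually carried it out: one would need to cascade Behrstock through the chain of complements $W_{i-1},W_{i-2},\dots$ to pin $\pi_{W_i}(c_j)$ near $\pi_{W_i}(c_{i-1})$ uniformly in $j$, and a single application of Behrstock to the pair $(W_i,W_k)$ does not do this (it controls the wrong coordinate at the next step). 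Making this precise requires its own induction, and at that point you have essentially reinvented the paper's argument with extra machinery.

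The paper avoids Behrstock entirely by a short minimal-counterexample argument, and the key idea you are missing is a \emph{second} application of BGIT rather than a projection chase. One proves the stronger statement that for every $j$, every geodesic from $c_0$ to $c_j$ contains $c_{j-1}$. Take $i$ minimal where this fails. BGIT on the offending geodesic gives $d_{W_{i-1}}(c_0,c_i)\le C$. By minimality, every geodesic from $c_0$ to $c_{i-2}$ passes through $c_1,\dots,c_{i-3}$ and hence is exactly the path $c_0,c_1,\dots,c_{i-2}$; in particular it avoids $c_{i-1}$, so BGIT applied to \emph{that} geodesic gives $d_{W_{i-1}}(c_0,c_{i-2})\le C$. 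The triangle inequality now yields $d_{W_{i-1}}(c_{i-2},c_i)\le 2C$, contradicting the hypothesis once $K=2C+1$. This produces a uniform constant with nothing beyond BGIT.
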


\begin{proof}
We let $K=2C+1$, for $C$ as in the Bounded Geodesic Image Theorem.

 We argue by contradiction. Suppose that $i\geq 2$ is minimal so that some geodesic $\gamma$ from $c_0$ to $c_i$ does not contain $c_{i-1}$ (for $i=1$ the statement is obvious, and by minimality we only have to show that $\gamma$ contains $c_{i-1}$). Then by the Bounded Geodesic Image Theorem we have $d_{\mathcal{AC}(\Sigma_g-N(c_i))}(c_{0},c_{i+1})\leq C$. However, we can also apply the same theorem to $c_0,\dots,c_{i-1}$ and get $d_{\mathcal{AC}(\Sigma_g-N(c_i))}(c_{0},c_{i-1})\leq C$. But then we would have $d_{\mathcal{AC}(\Sigma_g-N(c_i))}(c_{i-1},c_{i+1})\leq 2C$, a contradiction. 
\end{proof}

\subsection{Definition of steady paths}

We will construct surfaces in Heegaard splittings starting from paths (of multicurves) in the curve graph with certain properties described below. The key condition is a large links condition, item \ref{item:large_projection}.

The notion of steady path we describe below is related to the notion of tight geodesics as defined in \cite{MasurMinsky:I}, and in particular for $d$ large enough a steady path is a tight geodesic (this fact does not get used in the proof of the criterion for being Haken).

\begin{defn}\label{defn:steadypath}
 For $\calD_0,\calD_1$ two sets of curves on $\Sigma_g$, where $g\geq 2$, we say that a sequence of multicurves $t_0,\dots,t_n$ is a $(\calD_0,\calD_1,d)$--\emph{steady path} if
\begin{enumerate}
\item the curves of the multicurve $t_0$ (resp. $t_n$) are in $\calD_0$ (resp. $\calD_1$),\label{item:endpoints}
\item whenever $c\in t_i,c'\in t_{i+1}$, we have that $d_{\mathcal C(\Sigma_g)}(c,c')=1$,\label{item:path}
\item for every $i\neq 0,1$ (resp. $i\neq n-1,n$), every $d\in \calD_0$ (resp. $d\in\calD_1$) intersects $t_i$,\label{item:filling}
\item every $d\in \calD_0$ (resp. $d\in\calD_1$) not in $t_0$ (resp. $t_n$) intersects $t_0\cup t_1$ (resp. $t_{n-1}\cup t_n$),\label{item:maximal}
 \item $d_{\mathcal{AC}(\Sigma_g-N(t_i))}(t_{i-1},t_{i+1})\geq d$ for all $i=1,\dots,n-1$.\label{item:large_projection}
\end{enumerate}

\end{defn}

\subsection{Concatenations of arcs}

The following lemma will be important to rule out compression discs. Essentially, the loop $\ell$ describes the shape of the boundary of a disc that we will encounter later in an argument by contradiction. In that context, the first condition will be guaranteed by Definition \ref{defn:steadypath}.\ref{item:large_projection}.

\begin{lemma}\label{lem:concat_fill_arcs}
Let $Y$ be a compact surface with boundary. Then there does not exist a homotopically trivial loop $\ell$ obtained by concatenating essential arcs $\alpha_1,\dots,\alpha_k$ in  $Y$ such that:
  \begin{itemize}
   \item If $i$ is even and $j$ is odd then $d_{\mathcal{AC}(Y)}(\alpha_i,\alpha_j)> 1$ (i.e., the arcs intersect essentially),
   \item $\alpha_i$ is disjoint from $\alpha_j$ when $i\cong j (2)$ and $i\neq j$. 
  \end{itemize}
\end{lemma}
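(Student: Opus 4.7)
My plan is to argue by contradiction. Suppose such a loop $\ell = \alpha_1 \alpha_2 \cdots \alpha_k$ exists. After pairwise isotoping the arcs into minimal position (which does not affect the homotopy class of $\ell$), I lift $\ell$ to the universal cover $\tilde Y$, which is a topological disc since $Y$ has non-empty boundary. By null-homotopy of $\ell$, this lift is a closed polygonal loop $\tilde \ell = A_1 A_2 \cdots A_k$: each $A_i$ is a lift of $\alpha_i$, consecutive sides share a vertex on $\partial \tilde Y$, and, crucially, the odd-indexed lifts $A_1, A_3, \dots$ are pairwise disjoint in $\tilde Y$ as lifts of pairwise disjoint arcs of $Y$, and similarly for the even-indexed lifts.

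I would then induct on $k$. The base case $k = 2$ is immediate: the loop $\alpha_1 \alpha_2$ is a concatenation of two essential arcs sharing both endpoints, so null-homotopy would force $\alpha_1$ and $\alpha_2$ to be homotopic, hence isotopic, rel endpoints, directly contradicting $d_{\mathcal{AC}(Y)}(\alpha_1,\alpha_2) > 1$. For the inductive step, I search in $\tilde Y$ for interior intersections between two sides of $\tilde \ell$. By disjointness of same-parity lifts, any such intersection is between sides of different parities. An innermost such intersection, combined with the disc structure of the region bounded by $\tilde \ell$, either creates a bigon (impossible in minimal position) or produces a chord inside the disc that splits $\tilde \ell$ into two sub-polygons, from one of which we can extract a strictly shorter null-homotopic loop in $Y$ satisfying the lemma's hypotheses, invoking the inductive hypothesis for a contradiction.

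The main obstacle is the case when $\tilde \ell$ has no interior intersections between its sides. In that situation, I would use the essential intersection hypothesis on a consecutive pair $\alpha_i,\alpha_{i+1}$: the inequality $i(\alpha_i,\alpha_{i+1}) \geq 1$ forces some lift $A_{i+1}'$ of $\alpha_{i+1}$, distinct from $A_{i+1}$, to cross $A_i$ at an interior point of $\tilde Y$. This auxiliary lift is disjoint from every even side of $\tilde \ell$: either because it is a distinct lift of the embedded arc $\alpha_{i+1}$ (hence disjoint from $A_{i+1}$), or because it lifts an arc disjoint from $\alpha_{i+1}$ (hence disjoint from $A_{2j}$ for $2j \neq i+1$). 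Entering the disc bounded by $\tilde \ell$ through $A_i$, the arc $A_{i+1}'$ must therefore exit through an odd side $A_j$, producing a chord analogous to the one in the previous step. The delicate technical point is verifying that the sub-polygons obtained, whose boundaries include portions of lifts rather than full lifts, still project to a null-homotopic loop in $Y$ whose arcs satisfy the alternating disjointness and essential-intersection structure needed to invoke the inductive hypothesis.
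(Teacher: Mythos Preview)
Your overall setup---lifting to the universal cover and exploiting that same-parity lifts are pairwise disjoint while opposite-parity lifts meet transversally in single points---matches the paper's. The divergence is in how you finish, and the gap is exactly the point you flag as ``delicate'' at the end.

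In both branches of your inductive step you cut the lifted polygon along a chord that is a \emph{portion} of a lift: either the segment of some $A_j$ between two crossing points, or the segment of the auxiliary lift $A'_{i+1}$ between its crossings with two odd sides. When you project the resulting sub-polygon back to $Y$, the new boundary arcs include sub-arcs of the $\alpha_i$ whose endpoints lie in the \emph{interior} of $Y$, not on $\partial Y$. These are not essential arcs in the sense of the lemma, so the sub-loop is not an instance of the statement you are inducting on. Worse, the parity bookkeeping (``even arcs pairwise disjoint, odd arcs pairwise disjoint, opposite parities intersect essentially'') does not obviously survive: a truncated sub-arc of $\alpha_i$ need not be disjoint from, or essentially intersect, the other arcs in the way the full $\alpha_i$ does. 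So the induction does not close up; this is a genuine missing idea, not a routine verification.

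The paper avoids induction entirely. It picks one even index $n$ minimising the number of intersections of $\alpha_n$ with the union of odd arcs, and for each odd $i$ chooses a lift $\tilde\alpha^i_n$ of $\alpha_n$ crossing $\tilde\alpha_i$. A short minimality/separation argument shows these lifts $\tilde\alpha^i_n$ are pairwise disjoint. Since each separates $\tilde Y$ and $\tilde\ell$ crosses each exactly once, the two ends of the lifted path lie on opposite sides of $\tilde\alpha^1_n$, so $\tilde\ell$ cannot close up. This sidesteps the partial-arc problem completely: one never needs to manufacture a shorter counterexample.
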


\begin{proof}
Consider a loop $\ell$ obtained concatenating arcs as in the statement. We can assume that the $\alpha_i$ are in minimal position relative to their endpoints, i.e. that they do not form bigons.

  Consider a lift $\tilde \ell$ of $\ell$ in the universal cover $\tilde Y$ of $Y$. The goal is to show that $\tilde\ell$ is not a loop. Denote by $\tilde\alpha_i$ the lifts of the $\alpha_i$ that concatenate to form $\tilde \ell$. 
 
 The key facts that we will use are that
 \begin{enumerate}
  \item each lift of an $\alpha_i$ separates $\tilde Y$,
  \item $\tilde\alpha_i$ intersects some lift of $\alpha_j$ in its interior if and only if $i\not\cong j (2)$, and
  \item if a lift of $\alpha_i$ intersects a lift of $\alpha_j$, then it does so in one point.
 \end{enumerate}

(The third item follows from minimal position.)
 
 Notice that the number of arcs $k$ is at least $2$. Let $n$ be even so that the interior of $\alpha_n$ intersects $\bigcup_{m\, odd}\alpha_m$ in the minimal number of points among all even $n$. For $i$ odd, let $\tilde \alpha^i_n$ be a lift of $\alpha_n$ that intersects $\tilde\alpha_i$ in its interior. We are going to show that the $\tilde\alpha^i_n$ are all disjoint. This implies that the endpoints of $\tilde\ell$ are on opposite sides of, say, $\tilde\alpha^1_n$ (since traveling along $\tilde\ell$ one crosses all the $\tilde\alpha^i_n$ exactly once). Hence, this ensures that $\tilde\ell$ is not a loop.
 
 Suppose by contradiction that two $\tilde\alpha^i_n$ coincide (notice that two lifts of $\alpha_n$ are disjoint if and only if they do not coincide). We can then consider distinct odd indices $i,j$ so that $\tilde\alpha^i_n$ intersects $\tilde\alpha_j$ and $|i-j|$ is minimal. Then $|i-j|=2$. In fact, if, say, $i\geq j+4$ then $\tilde\alpha^{i+2}_n$ would intersect $\tilde\alpha_{j'}$ for $j'=i$ or $i+2< j'\leq j$, as suggested in Figure \ref{fig:disc_diagram}. More precisely, the intersection points of $\tilde\alpha^i_n$ with $\tilde\alpha_i$ and $\tilde\alpha_j$ lie in the same connected component of $\tilde Y-\tilde\alpha^{i+2}_n$ because distinct lifts of $\alpha_n$ do not intersect, while the endpoints of $\tilde\alpha_{i+2}$ lie in different connected components.  

 \begin{figure}[h]
  \includegraphics[scale=0.7]{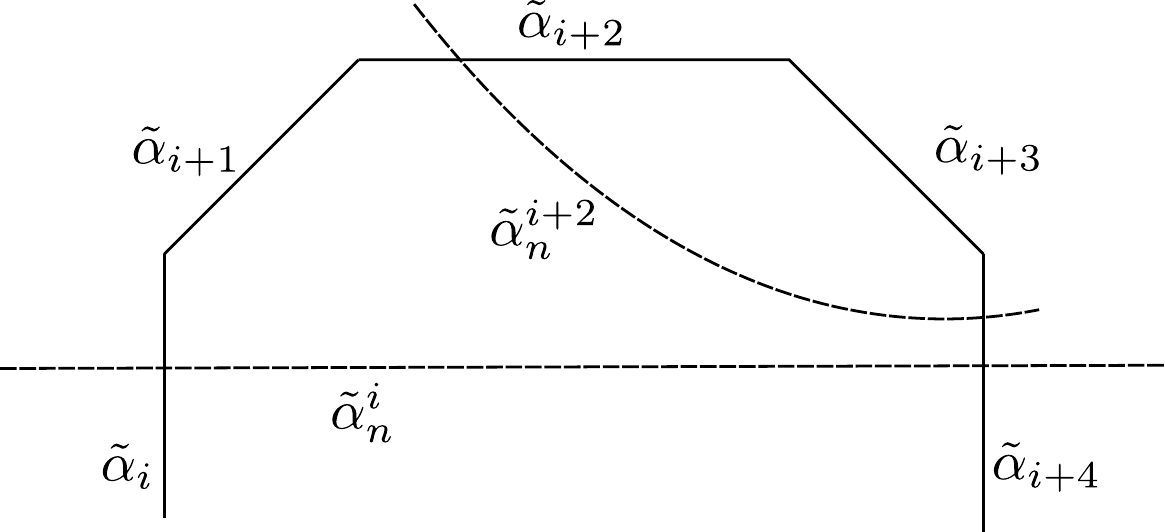}
  \caption{$\tilde\alpha^{i+2}_n$ cannot cross $\tilde\alpha^{i}_n$ (here $j=i+4$).}\label{fig:disc_diagram}
 \end{figure}

 Now, it is readily seen that all lifts of $\alpha_m$, $m$ odd, that intersect $\tilde\alpha_{i+1}$ in its interior also intersect $\tilde\alpha^i_n$ in its interior, see Figure \ref{fig:disc_diagram_2}. In fact, for any such lift $\tilde\alpha$, the intersection points of $\tilde\alpha^i_n$ with $\alpha_i$ and $\alpha_{i+2}$ each lie in the same connected component of $\tilde Y-\tilde\alpha$ as one of the endpoints of $\tilde\alpha_{i+1}$. 
 
  \begin{figure}[h]
  \includegraphics[scale=0.7]{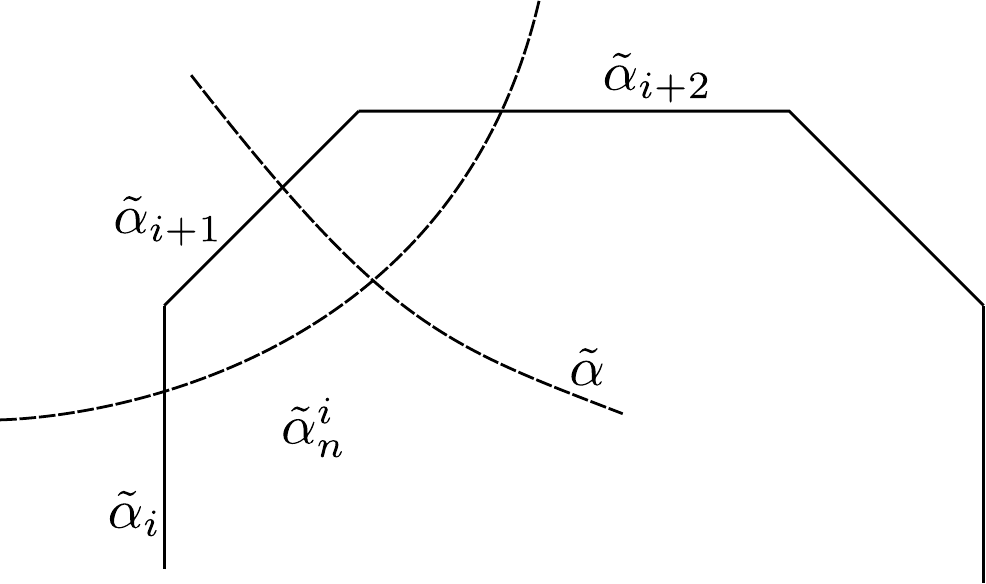}
  \caption{$\tilde\alpha$ is a lift of $\alpha_m$ for some odd $m$. If it crosses $\tilde\alpha_{i+1}$, it must also cross $\tilde\alpha^i_n$.}\label{fig:disc_diagram_2}
 \end{figure}

 Moreover, $\tilde\alpha^i_n$ also intersects in its interior two more lifts of some $\alpha_m$, namely $\tilde\alpha_i$ and $\tilde\alpha_{i+2}$. From this we deduce that the interior of $\alpha_{i+1}$ has fewer intersections with $\bigcup_{m\, odd} \alpha_m$ than the interior of $\alpha_n$, a contradiction.
\end{proof}

\subsection{Digression: Subsurface projection of discs}

We point out that Lemma \ref{lem:concat_fill_arcs}, besides being a key point in the proof of Theorem \ref{thm:steady_incompressible} below, also gives a significantly simpler proof of a useful lemma about subsurface projection of discs originally due to Masur-Schleimer \cite{MasurSchleimer} (which we need later). In fact, we improve the constants given by Masur-Schleimer (in this paper we measure distances between projection sets rather than diameters as in \cite{MasurSchleimer}; using the conventions of \cite{MasurSchleimer} we should replace ``1'' by ``3'' in both conclusions below).

In the statement we use the notions of arc graph $\mathcal A(X)$ of a surface with boundary $X$, that is defined similarly to the arc and curve graph using arcs only. Distances in the arc graph can be much larger than corresponding distances in the arc and curve graph, though, so the statement in terms of the arc graph is more refined than the corresponding statement in terms of the arc and curve graph.

\begin{cor}\label{cor:MasurSchleimer}(cfr. \cite[Lemma 12.20]{MasurSchleimer})
 Let $F$ be a compact surface with boundary (orientable or non-orientable) and let the handlebody $H$ be the orientable $[0,1]$--bundle of $F$. Then for every essential curve $d$ of $\partial H$ that bounds a disc of $H$ the following holds.
 \begin{itemize}
  \item If $F$ is orientable, and hence $H=F\times[0,1]$, let $X=F\times\{0\},Y=F\times\{1\}$ and let $\tau:X\to Y$ be the involution that switches the endpoints of the fibers. Then
  $$d_{\mathcal A(X)}(\pi_X(d),\tau(\pi_Y(d)))\leq 1.$$
  \item If $F$ is non-orientable, let $X=\partial H-(\partial X\times(0,1))$ and let $\tau:X\to X$ be the involution that switches the endpoints of the fibers. Then
  $$d_{\mathcal A(X)}(\pi_X(d),\tau(\pi_X(d)))\leq 1.$$
  Moreover, $\pi_X(d)$ lies within distance $1$ in $\mathcal{AC}(X)$ from a multicurve fixed by $\tau$.
 \end{itemize}
\end{cor}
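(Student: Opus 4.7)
The plan is to derive both bounds from Lemma \ref{lem:concat_fill_arcs}, using the compression disc $D$ with $\partial D=d$ to build a null-homotopic loop in the base $F$ whose essential arcs split into two parity classes: $\pi_X(d)$ and $\tau(\pi_Y(d))$ in the orientable case, $\pi_X(d)$ and $\tau(\pi_X(d))$ in the non-orientable case.

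I would first handle the orientable case $H=F\times[0,1]$. Isotope $D$ so that $d$ has minimal intersection with $\partial X$; then the components of $d\cap X$ and $d\cap Y$ are pairwise disjoint essential arcs representing $\pi_X(d)$ and $\pi_Y(d)$ respectively. Since $X$ and $Y$ are separated in $\partial H$ by the vertical annular boundary $\partial F\times[0,1]$, after standard bigon surgery eliminating consecutive same-side segments, $d$ breaks into alternating $X$--arcs, vertical arcs, $Y$--arcs, vertical arcs. Composing with the vertical projection $\pi\colon H\to F$ collapses the vertical arcs into $\partial F$, and after sliding endpoints along $\partial F$ to absorb the boundary pieces, yields a loop $\ell$ in $F$ whose essential arcs alternate between $\pi_X(d)$ and $\tau(\pi_Y(d))$. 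This loop is null-homotopic via $\pi\circ D\colon D^2\to F$.

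Arguing by contradiction: if $d_{\mathcal A(X)}(\pi_X(d),\tau(\pi_Y(d)))>1$ then every arc in one class intersects every arc in the other essentially (for individual arcs, arc-graph distance greater than $1$ coincides with essential intersection), while arcs within each class are disjoint by construction. This exactly matches the hypothesis of Lemma \ref{lem:concat_fill_arcs}, contradicting the existence of $\ell$. Thus $d_{\mathcal A(X)}(\pi_X(d),\tau(\pi_Y(d)))\leq 1$, as claimed.

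For the non-orientable case, the same projection argument works inside $H$ with the zero section $F\subset H$ in place of $F\times\{1/2\}$, after standard surgeries on $D\cap F$; alternatively one lifts $D$ to the orientation double cover $F'\times[0,1]\to H$ and invokes the orientable case. The \emph{moreover} clause would follow by pushing the simplified image $\pi(D)\subset F$ down to $F$: one extracts a multicurve of essential arcs and curves in $F$ whose preimage under the covering $X\to F$ is a $\tau$-invariant multicurve $M$ in $\mathcal{AC}(X)$ containing $\pi_X(d)$ up to isotopy, so $d_{\mathcal{AC}(X)}(\pi_X(d),M)=0$. The main technical obstacle throughout is ensuring that the alternation between $X$-- and $Y$--arcs (or between the two sheets of $X\to F$) in $\ell$ is literal rather than merely statistical; this is handled by the minimal-intersection choice and the bigon-elimination surgeries, without which the parity-labelling demanded by Lemma \ref{lem:concat_fill_arcs} would fail.
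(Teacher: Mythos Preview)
Your argument for the two distance bounds is essentially the paper's: put $d$ in normal form as an alternation of essential arcs in $X$ (and $Y$, in the orientable case) and fibers, collapse the fibers, and apply Lemma~\ref{lem:concat_fill_arcs} to the resulting null-homotopic concatenation. The only cosmetic difference is that you phrase the passage to $F$ as the bundle projection $\pi$, whereas the paper applies $\tau^i$ to $\alpha'_i$ and uses $\pi_1$--injectivity of $X\hookrightarrow H$; these produce the same loop.

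The \emph{moreover} clause, however, is where your proposal has a genuine gap. You write that one ``extracts a multicurve of essential arcs and curves in $F$'' from the projected disc $\pi(D)$ and lifts it to a $\tau$--invariant multicurve $M$ ``containing $\pi_X(d)$''. But $\pi(D)$ is an immersed disc in $F$ with no canonical multicurve attached to it, so it is unclear what you are extracting; and $\pi_X(d)$ consists of arcs, so it cannot be contained in a multicurve, nor can $d_{\mathcal{AC}(X)}(\pi_X(d),M)$ equal $0$. The paper's route here is more direct and does not use the disc at all beyond what Lemma~\ref{lem:concat_fill_arcs} already gave: having found an odd $i$ and even $j$ with $\alpha'_i$ disjoint from $\tau(\alpha'_j)$, one notes that $\alpha'_j$ is then disjoint from both $\alpha'_i$ (they are distinct components of $d\cap X$) and from $\tau(\alpha'_i)$ (apply $\tau$ to the disjointness just obtained). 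Hence the $\tau$--invariant subsurface $Z$ filled by $\alpha'_i\cup\tau(\alpha'_i)$ is proper, and the essential components of $\partial Z$ form a $\tau$--fixed multicurve disjoint from $\alpha'_j\in\pi_X(d)$. If you want to repair your version, this is the construction to aim for; the projected disc itself is a red herring.
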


\begin{proof}
 Applying an isotopy, me can make sure that $d$ consists of a union of arcs each of which is either an essential arc of $X$ or $Y$ or a fiber over a boundary point of $F$. We can then disregard the arcs of the second type and have a sequence $\alpha'_1,\dots,\alpha'_k$ of arcs alternately in $X$ and $Y$ in the orientable case, and just in $X$ in the non-orientable case. Now, the arcs $\alpha_i=\tau^i(\alpha'_i)$ concatenate to form a homotopically trivial loop (in both cases, the inclusion of $X$ in $H$ is $\pi_1$--injective). By Lemma \ref{lem:concat_fill_arcs}, we get that some $\alpha_i$ for $i$ odd needs to be disjoint from $\alpha_j$ with $j$ even. This translates into the conditions described in the statement of the corollary.
 
  To get the conclusion about the fixed multicurve, consider some $\alpha'_i$ which is disjoint from some $\tau(\alpha_j')$. Then the subsurface $Z$ filled by $\alpha'_i$ and $\tau(\alpha'_i)$ is not the whole surface, and the set of essential curves in $\partial Z$ form the required multicurve.
\end{proof}

\section{Steady surfaces}

\subsection{Heegaard splittings}\label{subsec:heegard}
We denote by $H_g$ the oriented handlebody of genus $g$, and we identify $\Sigma_g=\partial H_g$. A \emph{Heegaard splitting} $\calH(\phi)$, where $\phi:\partial H_g\to\Sigma_g$ is a homeomorphism, is the $3$--manifold denote $M(\phi)$ obtained gluing two copies $H^0_g,H^1_g$ of $H_g$ to the boundary components $\Sigma_g\times\{0\}, \Sigma_g\times\{1\}$ of $\Sigma_g\times[0,1]$ using, respectively, the identity $\partial H^0_g\to\Sigma_g$ and $\phi:\partial H^1_g\to\Sigma_g$.\footnote{This always yields an orientable manifold; in order to obtain a 3-manifold and an orientation on it, we could insist that $\phi$ is orientation-preserving and reverse the orientation of one of the handlebodies.}  We can then define the disc set $\calD_i$, for $i=0,1$, as the set of all isotopy classes of curves on $\Sigma_g$ that bound a disc in the handlebody attached to $\Sigma_g\times\{i\}$.

 We say that the \emph{Heegaard distance} of $\calH(\phi)$ is the distance in the curve graph of $\Sigma_g$ between the disc sets.
%
%

\begin{defn}\label{defn:steadysurface}
 Let $\calH(\phi)$ be a Heegaard splitting with Heegaard distance at least $2$.
 
 A $d$--\emph{steady surface} in $M=M(\phi)$ is any embedded surface $S$ in $M$ constructed in the following way. Let $t_0,\dots,t_n$ be a $(\calD_0,\calD_1,d)$--steady path.
 Choose open regular neighborhoods $N(t_i)$ of the multicurves $t_i$, with $\overline N(t_i)\cap \overline N(t_{i+1})=\emptyset$. Finally, let $S$ be the union of
 \begin{itemize}
  \item a union of disjoint discs in $H_g^0$ (resp. $H_g^1$) with boundary $\partial \overline N(t_0)$ (resp. $\partial \overline N(t_n)$),
  \item surfaces $S_i=S'_i\times\{i/(n+1)\}$, where $S'_i=\Sigma_g\setminus \Big(N(t_i)\cup N(t_{i-1})\Big)$, for $i=1,\dots,n$,
  \item the unions of annuli $A_i=\partial \overline N(t_i)\times [i/(n+1),(i+1)/(n+1)]$, for $i=0,\dots,n-1$.
 \end{itemize}
 
\end{defn}

We remark that steady surfaces are orientable.

It is easy to give conditions for a steady surface to have a connected component which is not a sphere, for example:

\begin{lemma}\label{lem:Euler}
 Suppose that $t_0,\dots,t_n$ is a steady path for the surface $\Sigma_g$ defining the steady surface surface $S$. If either $g\geq 3, n\geq3$ or $g\geq 2,n\geq 4$, and $t_1,t_{n-1}$ consist of a single curve, then $S$ has a connected component which is not a sphere. 
\end{lemma}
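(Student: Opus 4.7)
The plan is to isolate the ``middle'' part of $S$, where the hypothesis $|t_1|=|t_{n-1}|=1$ is effective, and perform an Euler characteristic count there. The main obstacle avoided by restricting to this middle part is that the disc caps on $\partial\overline{N}(t_0)$ and $\partial\overline{N}(t_n)$ can contribute arbitrarily much to $\chi(S)$ when $t_0$ or $t_n$ is a large multicurve, so a naive count on all of $S$ need not be negative.

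Set
\[
M^{\star} \;=\; \bigcup_{i=2}^{n-1} S_i \;\cup\; \bigcup_{i=2}^{n-2} A_i \;\subseteq\; S,
\]
with the convention that the second union is empty when $n=3$. Each $S_i=\Sigma_g\setminus(N(t_{i-1})\cup N(t_i))$ is obtained from the closed surface $\Sigma_g$ by removing open annular neighborhoods of curves, so $\chi(S_i)=\chi(\Sigma_g)=2-2g$. The annuli and all pairwise overlaps (which are circles) have Euler characteristic zero, so additivity yields $\chi(M^{\star})=(n-2)(2-2g)$. Moreover, for $2\le i\le n-2$ every boundary circle of $S_i$ or $S_{i+1}$ coming from $\partial\overline{N}(t_i)$ is identified in pairs by the annulus $A_i$ inside $M^{\star}$, so the only boundary circles of $M^{\star}$ are the ones coming from $\partial\overline{N}(t_1)$ in $S_2$ and from $\partial\overline{N}(t_{n-1})$ in $S_{n-1}$. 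By the hypothesis $|t_1|=|t_{n-1}|=1$, this gives exactly $4$ boundary circles.

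Suppose for contradiction that every component of $M^{\star}$ is planar (a sphere with holes). A planar component with $b_j$ boundary circles satisfies $\chi=2-b_j$, so summing over the $c$ components gives $\chi(M^{\star})=2c-4$, i.e.,
\[
c \;=\; 2 + (n-2)(1-g).
\]
Under either hypothesis $(g\ge 3,\,n\ge 3)$ or $(g\ge 2,\,n\ge 4)$ we have $(n-2)(g-1)\ge 2$, which forces $c\le 0$, contradicting $c\ge 1$. Hence some component $C$ of $M^{\star}$ is non-planar.

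Finally, a compact non-planar orientable surface does not embed in $S^{2}$, so the connected component of the closed surface $S$ containing $C$ cannot be a $2$-sphere, as required.
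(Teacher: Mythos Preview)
Your proof is correct and follows essentially the same approach as the paper: both isolate the same middle piece $M^\star=S'=\bigcup_{i=2}^{n-1}S_i\cup\bigcup_{i=2}^{n-2}A_i$, compute $\chi=(n-2)(2-2g)$, count $4$ boundary circles using $|t_1|=|t_{n-1}|=1$, and conclude this piece has positive genus. Your write-up is more explicit than the paper's in deriving the contradiction from the planar hypothesis and in explaining why a non-planar subsurface forces the ambient closed component of $S$ to be non-spherical.
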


\begin{proof}
 We use the notation of Definition \ref{defn:steadysurface}. Consider the subsurface $S'$ consisting of the union of all $S_i$ with $i\neq 1,n$ and the annuli $A_i$ for $i=2,\dots,n-2$ (no annuli if $n=3$). The Euler characteristic of $S'$ is $ (2-2g)(n-2)$, and $S'$ has $4$ boundary components. Under both sets of assumptions on $(g,n)$, we can conclude that $S'$ has positive genus, and hence one connected component of $S$ is not a sphere.
\end{proof}

Finally, we prove the key result to construct Haken manifolds from Heegaard splittings.

\begin{thm}\label{thm:steady_incompressible}
Let $\calH(\phi)$ be a Heegaard splitting with $g\geq 2$ of distance at least $2$. If $S$ is a $5$--steady surface in $M=M(\phi)$, then $S$ does not admit a compression disc.
\end{thm}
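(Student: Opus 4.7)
The plan is to argue by contradiction. Suppose $D$ is a compression disc for $S$; I will produce a null-homotopic loop in $Y = \Sigma_g - N(t_i)$ (for a middle index $i$, say $2 \le i \le n-2$) whose existence violates Lemma \ref{lem:concat_fill_arcs}. The crucial preliminary observation is that $Y \hookrightarrow M$ is $\pi_1$-injective: if $\gamma \subset Y$ is an essential simple closed curve null-homotopic in $M$, Dehn's lemma provides an embedded disc $\Delta \subset M$ with $\partial \Delta = \gamma$; standard innermost-disc surgery along the Heegaard surface $\Sigma_g \times \{i/(n+1)\}$ (using that innermost sub-discs give compression discs whose boundaries lie in $\calD_0 \cup \calD_1$, and hence intersect $t_i$ by conditions \ref{item:filling}--\ref{item:maximal}) eventually reduces to $\Delta$ meeting the Heegaard surface only in $\gamma$, whence $\gamma \in \calD_0 \cup \calD_1$, contradicting $\gamma \subset Y$.

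After isotoping $D$ to general position with respect to the horizontal surfaces and applying standard innermost-disc/circle arguments, I may assume all intersections $D \cap (\Sigma_g \times \{s\})$ consist of arcs only for generic $s$. I would first rule out $\partial D$ lying in a single piece of $S$: capping discs and annuli $A_j$ admit no essential loops, while $\partial D \subset S_j$ forces $\partial D \in \calD_0 \cup \calD_1$ by the Heegaard-splitting argument above, contradicting $\partial D$ being disjoint from $t_j$ (by condition \ref{item:filling} for middle $j$, or \ref{item:maximal} for $j$ near the endpoints). The main technical step is to further simplify so that $\partial D \subset S_i \cup A_i \cup S_{i+1}$ and every arc of $\partial D \cap A_i$ is ``vertical'' on $A_i$ (running from the bottom circle to the top); this should follow by repeatedly pushing excursions of $\partial D$ off other pieces of $S$ using horizontal sweeps of $D$, and by eliminating ``U-turn'' arcs on $A_i$ via innermost-arc arguments on the annulus.

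With $\partial D$ in this normal form, project it to $\Sigma_g$ via $(x,t) \mapsto x$. Vertical $A_i$-arcs collapse to points on $\partial Y = \partial N(t_i)$, while the $S_i$- and $S_{i+1}$-arcs map into $S'_i, S'_{i+1} \subset Y$. The projected loop $\ell = \pi(\partial D)$ then lies in $Y$ and decomposes as an alternating concatenation $\alpha_1 \ast \cdots \ast \alpha_k$ of essential arcs, with odd $\alpha_j$ in $S'_i \subset Y - N(t_{i-1})$ and even $\alpha_j$ in $S'_{i+1} \subset Y - N(t_{i+1})$. Arcs of the same parity are disjoint sub-arcs of $\partial D$, while arcs of opposite parity satisfy $d_{\mathcal{AC}(Y)}(\alpha_{\text{odd}}, \alpha_{\text{even}}) \ge d_{\mathcal{AC}(Y)}(t_{i-1}, t_{i+1}) - 2 \ge 5 - 2 > 1$, since an arc in $S'_i$ is disjoint from $t_{i-1}$ and so within $\mathcal{AC}(Y)$-distance $1$ of $\pi_Y(t_{i-1})$, and similarly for $S'_{i+1}$ and $t_{i+1}$. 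Finally, $\ell$ is homotopic to $\partial D$ inside $\Sigma_g \times [0,1] \subset M$, so it is null-homotopic in $M$, hence in $Y$ by the $\pi_1$-injectivity established above. This violates Lemma \ref{lem:concat_fill_arcs}, giving the contradiction.

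The principal obstacle is the simplification step in the second paragraph: confining $\partial D$ to $S_i \cup A_i \cup S_{i+1}$ with only vertical $A_i$-arcs while preserving $D$'s disc structure, and verifying that the resulting projected arcs $\alpha_j$ are essential in $Y$ (inessential, boundary-parallel arcs would need to be absorbed via further reductions). I expect this to require careful disc surgery guided by the horizontal slicing of $D$.
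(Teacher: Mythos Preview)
Your identified ``principal obstacle'' is indeed the gap, and it is not a detail to be filled in later: there is no mechanism for isotoping $\partial D$ in $S$ so that it lies in $S_i\cup A_i\cup S_{i+1}$ for some fixed $i$. Essential arcs of $\partial D$ in a given $S_j$ cannot be slid off $S_j$ while keeping $D$ a disc, and ``pushing excursions off other pieces using horizontal sweeps of $D$'' does not correspond to any concrete operation. The paper does not attempt this. Instead, after eliminating intersection \emph{loops} in $\mathring D\cap\overline\Sigma_g$ (this is where conditions \ref{item:filling} and \ref{item:maximal} enter, essentially as in your first paragraph), the remaining intersection \emph{arcs} cut $D$ into polygonal regions. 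Since these are disjoint properly embedded arcs in a disc, $(\#\text{regions})-(\#\text{arcs})=\chi(D)=1$, so some region $R$ has at most one intersection arc $\alpha$ on its boundary. The complementary piece $\beta=\overline{\partial R\setminus\alpha}$ lies in $\partial D$, and because $\mathring R$ misses every horizontal level, the vertical arcs appearing in $\beta$ are automatically confined to two adjacent pieces $S_i,S_{i\pm1}$ --- exactly the localisation you wanted, obtained for free and with $i$ determined by $R$ rather than chosen in advance.

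Projecting the disc $R\subset\Sigma_g\times[\,i/(n{+}1),(i{\pm}1)/(n{+}1)\,]$ to the $\Sigma_g$ factor then gives the null-homotopy of the projected boundary directly in $\Sigma_g-N(t_i)$, and Lemma~\ref{lem:concat_fill_arcs} applies as in your final paragraph. There is no need for your $\pi_1$-injectivity argument (which in any case would require more care, since the loop theorem does not immediately hand you a disc lying on one side of the Heegaard surface), nor for the restriction to a middle index $i$.
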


\begin{proof}
 We use the notation of Definition \ref{defn:steadysurface}. Suppose by contradiction that the steady surface $S$ admits a compression disc $D$. We now isotope $D$ in a suitable normal form in a few steps.
 
 First of all, applying an isotopy we can assume that the boundary of $D$ is either
 \begin{enumerate}
  \item a simple loop contained in some $S_i$, or\label{item:loop}
  \item a union of essential arcs in the $S_i$, that we call \emph{vertical arcs}, and arcs in the $A_i$ of the form $\{p\}\times  [i/(n+1),(i+1)/(n+1)]$, that we call \emph{horizontal arcs}.\label{item:arcs}
 \end{enumerate}
 
 This is just because we can remove inessential arcs in the various $\partial D\cap S_i$ and $\partial D\cap A_i$ starting from innermost ones.
 
 Moreover, we will consider:
 \begin{enumerate}
  \item simple loops in $\mathring D$ which are connected components of the intersection of $\mathring D$ and $\overline{\Sigma}_g=\bigcup \Sigma_g\times\{i/(n+1)\}$. We call these \emph{intersection loops}.
  \item simple arcs in $D$ whose interior is a connected component of the intersection of $\mathring D$ and $\overline{\Sigma}_g$. We call these \emph{intersection arcs}.
 \end{enumerate}

 We can assume that there are finitely many intersection loops and intersection arcs.
 
 
 \emph{Ruling out intersection loops.} We now argue that we can isotope $D$ to remove intersection loops and that case \ref{item:loop} does not occur. Consider an innermost intersection loop $\ell$ and suppose that it is contained in $S_i$.  If $\ell$ bounds a disc in $S_i$ then a simple surgery arguments allows us to replace $D$ by a disc that has fewer loops in the intersection with $\overline\Sigma_g$, hence we can assume that $\ell$ is essential in $\Sigma_g\times\{i/(n+1)\}$ (we are not ruling out that it is parallel into the boundary of $S_i$, for now). Notice that the subdisc of $D$ bounded by $\ell$ does not intersect one of the handlebodies and hence, when identifying $\Sigma_g\times\{i/(n+1)\}$ with $\Sigma_g$, we have $\ell\in \calD_0\cup \calD_1$. Definition \ref{defn:steadypath}.\ref{item:filling} rules out that $\ell$ is contained in $S_i$ for $i\neq 1,n$, so that $\ell$ is contained in $S_1$ or $S_n$. But then Definition \ref{defn:steadypath}.\ref{item:maximal} implies that $\ell$ is parallel to a component of $t_0\times\{1/(n+1)\}$ or $t_n\times\{n/(n+1)\}$, which bound discs in $S$. Hence, we can once again replace $D$ with a disc that has fewer loops in the intersection with $\overline\Sigma_g$. 
 
 We can then go on and remove all loops in $\mathring D\cap \overline{\Sigma}_g$, and finally a very similar argument proves that case \ref{item:loop} does not occur because otherwise  $\partial D$ would not be essential in $S$.
 
 
\emph{Cutting up $D$.} We now have that there are no intersection loops, just intersection arcs.
The intersection arcs subdivide $D$ into finitely many closed (polygonal) regions. The Euler characteristic of $D$ equals the number of such regions minus the number of intersection arcs. We are now going to argue that each region contains at least two intersection arcs, leading to a contradiction.

In fact, suppose that a region $R$ contains only one intersection arc $\alpha$, say contained in $S_i$. The closure of $\partial R-\alpha$ is an arc $\beta$ contained in $\partial D$, and hence it is a concatenation of (alternately) horizontal and vertical arcs. Moreover, the vertical arcs are all contained in either $S_i\cup S_{i+1}$ or $S_{i-1}\cup S_i$, for otherwise the interior of $R$ would have to intersect either $S_{i-1}$ or $S_{i+1}$. Since $R$ is contained in $\Sigma_g$ times an interval, we can then project $\partial R$ to the factor $\Sigma_g$, and obtain a concatenation of paths as described in Lemma \ref{lem:concat_fill_arcs} (notice that an arc in $\Sigma_g-N(t_i)$ disjoint from $t_{i-1}$ intersects any arc in $\Sigma_g-N(t_i)$ that intersects $t_{i+1}$ at most once since $d_{\mathcal{AC}(\Sigma_g-N(t_i))}(t_{i-1},t_{i+1})\geq5$). However, such concatenation is homotopically trivial because we can also project $R$, a contradiction.
\end{proof}

\section{Construction of Haken manifolds}
\label{sec:constr}

 \subsection{Fixing gluing maps}
 
 Fix a genus $g\geq 3$. Then the handlebody $H_g$ of genus $g$ can be identified with the product $F\times [0,1]$, where $F$ is a sphere with at least $4$ discs removed. We denote by $\sigma_0$, $\sigma_1$ the core curves of two connected components of $\partial F\times[0,1]$. We claim that there exists a Heegaard splitting $H(\iota=\iota_1\circ\iota_2)$ of the sphere $S^3$ so that $\iota_i\in Stab(\sigma_i)$. In fact, there is a curve $c$ on $\partial H_g$ that bounds a disc and separates $\sigma_1$ from $\sigma_2$; just consider the product of an arc in $F$ that separates the components of $\partial F$ corresponding to $\sigma_1,\sigma_2$, and take the product with $[0,1]$. Now, the usual gluing map that exchanges meridian and longitudes can be written as a product of two homeomorphisms that each restrict to the identity on one component of the complement of $c$, as required.
 
 Denote by $\calD$ the disc set of $H_g$, so that the disc sets associated to the Heegaard splitting are $\calD$ and $\iota(\calD)$.
 
%
 
 We fix the data described in this subsection from now on.
 

 \subsection{Constructing large partial pseudo-Anosovs}

 Let $\mathcal K$ be the subgroup of $MCG(\Sigma_g)$ generated by Dehn twists around separating curves. By \cite{Morita}, there is a homomorphism $J:\mathcal K\to \mathbb Z$ so that, for all $\phi\in\mathcal K$, the Casson invariant of $H(\iota\circ\psi)$ is $J(\psi)$.
 
 \begin{lemma}\label{lem:product_twists}
 For $i=0,1$ the following holds. For every $L$ there exists $\phi_i\in \mathcal K\cap Stab(\sigma_i)$ so that $J(\phi_i)=1$, $\phi_i$ has a geodesic axis in $\mathcal {AC}(\Sigma_g- N(\sigma_i))$, and the translation distance of $\phi$ is at least $L$.
 \end{lemma}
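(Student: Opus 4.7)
The plan is to construct $\phi_i$ as a product $\phi_i := \chi \cdot \psi^N$, where $\psi$ is a pseudo-Anosov on $Y_i := \Sigma_g - N(\sigma_i)$ lying in $\mathcal K \cap \mathrm{Stab}(\sigma_i) \cap \ker J$ and $\chi \in \mathcal K \cap \mathrm{Stab}(\sigma_i)$ is a fixed element with $J(\chi) = 1$. Since $J$ is a homomorphism, this gives $J(\phi_i) = 1$ automatically for every $N$, decoupling the Casson calibration from the translation-length requirement. Note first that $\sigma_i$, being the core of an annular component of $\partial F \times [0,1] \subset \partial H_g$, is non-separating in $\Sigma_g$, so $Y_i$ is connected of genus $g-1 \geq 2$ with two boundary components, and any mapping class supported in $Y_i$ automatically fixes $\sigma_i$.

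To produce $\psi$, I would pick two simple closed curves $a, b \subset Y_i$ that fill $Y_i$ and are each separating in $\Sigma_g$; this is possible because a curve in $Y_i$ separating its two boundary copies of $\sigma_i$ is null-homologous in $\Sigma_g$, and with $g - 1 \geq 2$ there is ample room to choose two such filling curves. Setting $\alpha := J(T_a)$ and $\beta := J(T_b)$ (both nonzero for a suitable choice, by Morita's computation of $J$ on separating twists), Thurston's construction yields $\psi := T_a^{\beta M} T_b^{-\alpha M}$ pseudo-Anosov on $Y_i$ for $M$ large, lying in $\mathcal K \cap \mathrm{Stab}(\sigma_i)$ as a product of separating twists around curves disjoint from $\sigma_i$ and satisfying $J(\psi) = M\beta\alpha - M\alpha\beta = 0$. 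For $\chi$, I would again use Morita's formula: as the topological type of a separating curve $c$ disjoint from $\sigma_i$ varies, the values $J(T_c)$ (polynomial in the genera of the two sides) have greatest common divisor $1$ for $g \geq 3$, so a suitable $\mathbb Z$-combination of such twists produces $\chi \in \mathcal K \cap \mathrm{Stab}(\sigma_i)$ with $J(\chi) = 1$.

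Finally, define $\phi_i := \chi \cdot \psi^N$ for $N$ to be chosen. Then $J(\phi_i) = J(\chi) + N \cdot 0 = 1$ and $\phi_i \in \mathcal K \cap \mathrm{Stab}(\sigma_i)$ by construction. Since $\psi$ acts on the Gromov-hyperbolic graph $\mathcal{AC}(Y_i)$ loxodromically with positive stable translation length $\tau$, the power $\psi^N$ has translation length $N\tau$; replacing $\chi$ by a conjugate $\psi^{-k}\chi\psi^k$ (which preserves the $J$-value) one can arrange the support of $\chi$ to be disjoint from a neighborhood of the axis of $\psi$, so that $\chi$ has bounded displacement on that axis. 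A standard North-South / ping-pong argument then shows $\chi\psi^N$ remains pseudo-Anosov on $Y_i$ with translation length at least $N\tau - O(1) \geq L$ for $N$ large. The required invariant bi-infinite geodesic axis follows from standard facts about loxodromic actions on hyperbolic graphs together with the tight-geodesic theory in curve complexes.

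The main obstacle is juggling the four constraints simultaneously — pseudo-Anosov translation on $Y_i$, exact equality $J(\phi_i) = 1$, membership in the Johnson kernel $\mathcal K$, and stabilization of $\sigma_i$ — since naively taking high powers to achieve large translation distance would scale $J$ away from $1$. The key resolution is that $J$ is a homomorphism, so once $\psi$ is engineered to lie in $\ker J$ (a linear-algebraic adjustment of the exponents in Thurston's construction) and $\chi$ provides a single unit of $J$-value, arbitrary high powers of $\psi$ can be inserted without disturbing $J(\phi_i)$.
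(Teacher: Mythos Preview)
Your overall strategy---engineer a pseudo-Anosov $\psi\in\ker J$ supported on $Y_i$ and correct by a fixed $\chi$ with $J(\chi)=1$---is a reasonable idea and genuinely different from the paper's, but there are two real gaps.

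First, your homology claim is backwards. A curve $c\subset Y_i$ that \emph{separates} the two boundary copies of $\sigma_i$ cobounds (with one of those copies) a subsurface of $Y_i$, so $[c]=[\sigma_i]\neq 0$ in $H_1(\Sigma_g)$; such $c$ is \emph{non}-separating in $\Sigma_g$ and $T_c\notin\mathcal K$. The curves in $Y_i$ that are separating in $\Sigma_g$ are precisely those separating $Y_i$ with both boundary components on the same side. A filling pair $a,b$ of this type does exist (fill the closed genus-$(g{-}1)$ surface with two separating curves, then puncture two complementary discs lying on the same side of each), but this needs an argument and is not what you wrote.

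Second, the existence of a \emph{geodesic} axis for $\chi\psi^N$ is not a ``standard fact''. Loxodromics on hyperbolic graphs have quasi-axes; tight-geodesic finiteness yields an invariant geodesic only for some \emph{power} of a pseudo-Anosov, and even granting one for $\psi$ you lose it after multiplying by $\chi$. (Your conjugation step $\psi^{-k}\chi\psi^{k}$ does nothing here: $\psi$ preserves its own axis setwise, so displacement along it is unchanged.) The paper avoids this entirely. It takes a single separating curve $c_1\subset Y_i$ with $J(\tau_{c_1})=1$ (obtained from a trefoil on a stabilised genus-$2$ Heegaard surface, via \cite{LubotzkyMaherWu}), sets $c_2=k(c_1)$ far from $c_1$ in $\mathcal{AC}(Y_i)$, and lets $\phi_i$ be a product of large powers of $\tau_{c_1},\tau_{c_2}$. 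The geodesic axis is then \emph{built explicitly} by concatenating sub-geodesics between the $c_j$ and their translates and invoking the Bounded Geodesic Image Theorem (as in Mangahas); the translation length is read off directly, and the exponents are tuned at the end to hit $J(\phi_i)=1$. Compared to your decoupling via $\ker J$, this is less conceptual but delivers the geodesic axis for free.
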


 \begin{proof}
 It is proven in \cite[Lemma 5]{LubotzkyMaherWu} that there is a Dehn twist $\tau_{c}$ around a separating curve so that $J(\tau_c)=1$. In fact, they obtain $c$ as follows. First, they embed a trefoil knot on a Heegaard surface of genus $2$ in $S^3$ (see \cite[Figure 2]{LubotzkyMaherWu}), and then they stabilise the Heegaard splitting, using stabilisations disjoint from such embedding, to get a curve $c$ on a Heegaard surface of the required genus. Up to isotopy, $\sigma_i$ is contained in one of the solid tori that one connect-sums to the Heegaard surface of genus 2 to stabilise (recall that we are dealing with the case of genus at least 3). In particular, one can ensure that $c$ is contained in $\Sigma_g-N(\sigma_i)$.
 
 Set $c_1=c$ and let $c_2$ be of the form $k(c_1)$ for some $k\in\mathcal K$ and with the property that $c_1$ and $c_2$ are at least $L+2$ apart from each other in $\mathcal {AC}(\Sigma_g- N(\sigma_i))$.
  
  We now consider any product $\phi_i$ of sufficiently large powers of the $\tau_{c_j}$. In order to construct a geodesic axis for $\phi_i$, there is a standard procedure (see the proof of \cite[Proposition 3.3]{Mangahas}, and in particular \cite[Lemma 3.10]{Mangahas}): One starts from geodesics connecting the $c_j$, takes subgeodesics connecting curves disjoint from the $c_j$, starts ``rotating'' these using the $\tau_{c_j}$ and takes concatenations. It is now easy to use the Bounded Geodesic Image Theorem, similarly to Lemma \ref{lem:concat_large_angle}, to show that such concatenation is a geodesic line, and that the translation distance is at least $L$ (we have not recalled the definition of the arc graph of an annulus, but the only fact about it that is needed for this construction is that the corresponding Dehn twist acts with positive translation length).
  
  Finally, choosing the powers of the $\tau_{c_j}$ suitably, we can further ensure $J(\phi_i)=1$, as required.
 \end{proof}

\subsection{Moving $\sigma_i$ off the disc set}

\begin{lemma}\label{lem:rotate_off_disc_set}
 For $i=0,1$ and for every $d$, there exists $\phi_i\in \mathcal K\cap Stab(\sigma_i)$ with $J(\phi_i)=1$ so that for every integer $k\neq 0$ we have
 $$d_{\mathcal {AC}(\Sigma_g-N(\sigma_i))}(\calD,\phi^k_i(\sigma_{i+1}))\geq d,$$
 $$d_{\mathcal {AC}(\Sigma_g-N(\sigma_i))}(\calD,\phi^k_i\iota_i(\sigma_{i+1}))\geq d.$$
\end{lemma}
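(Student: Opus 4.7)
The plan is to apply Lemma \ref{lem:product_twists} with translation length $L$ chosen large in terms of $d$, and make a careful choice of the separating curves $c_1, c_2$ used in the construction so that the axis $A$ of the resulting $\phi_i$ in $\mathcal{AC}(\Sigma_g - N(\sigma_i))$ is well-positioned relative to the subsurface projection of $\calD$, ensuring that both orbits $\{\phi_i^k(\sigma_{i+1})\}$ and $\{\phi_i^k(\iota_i(\sigma_{i+1}))\}$ escape neighborhoods of $\calD$ for $k \neq 0$.

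First I would invoke Lemma \ref{lem:product_twists} to produce $\phi_i \in \mathcal{K} \cap Stab(\sigma_i)$ with $J(\phi_i) = 1$ and translation length $\geq L$ on $\mathcal{AC}(\Sigma_g - N(\sigma_i))$. Inspecting the Mangahas-style construction, the axis $A$ lies in the $1$-neighborhood of the Dehn-twist orbit of $\{c_1, c_2\}$, where $c_1, c_2 \subset \Sigma_g - N(\sigma_i)$ are the separating curves used to build $\phi_i$.

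Next I would choose $c_1$ and $c_2 = k(c_1)$ (for a suitable $k \in \mathcal{K} \cap Stab(\sigma_i)$) so that the closest-point projection onto $A$ of the subsurface projection of $\calD$ has bounded diameter, say $\leq M$. This uses that the subsurface projection of $\calD$ is a quasi-convex subset of $\mathcal{AC}(\Sigma_g - N(\sigma_i))$ whose limit set forms a proper subset of the Gromov boundary; we choose $c_1, c_2$ so that the axis endpoints $\phi_i^{\pm}$ avoid this limit set. Then for each $k \neq 0$, the closest point on $A$ to $\phi_i^k(\sigma_{i+1})$ is $\phi_i^k(q)$, where $q$ is the closest point on $A$ to $\sigma_{i+1}$; this orbit point on $A$ moves by at least $|k|L$ per step. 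Choosing $L > M + d + O(1)$ places it outside the $(M+d)$-neighborhood of the $A$-projection of $\calD$, and the standard hyperbolic closest-point projection estimate then yields $d_{\mathcal{AC}(\Sigma_g - N(\sigma_i))}(\calD, \phi_i^k(\sigma_{i+1})) \geq d$. The argument for $\phi_i^k(\iota_i(\sigma_{i+1}))$ is identical, with $\iota_i(\sigma_{i+1})$ in place of $\sigma_{i+1}$ as the base curve.

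The main obstacle is the second step: verifying that we can arrange the axis endpoints $\phi_i^{\pm}$ to lie outside the limit set of the subsurface projection of $\calD$ in $\partial\mathcal{AC}(\Sigma_g - N(\sigma_i))$. This requires the $\mathcal{K} \cap Stab(\sigma_i)$-orbit of a separating curve $c$ with $J(\tau_c) = 1$ to be rich enough to contain configurations whose associated Dehn-twist axes limit outside this limit set; this is plausible given the quasi-convexity of the disc-set projection and the large Torelli orbit, but requires care to combine with the translation-length condition inherited from Lemma \ref{lem:product_twists}.
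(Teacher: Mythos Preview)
Your high-level strategy is right and matches the paper: take $\phi_i$ from Lemma~\ref{lem:product_twists} with large translation length, arrange that the nearest-point projection of $\pi_{\Sigma_g-N(\sigma_i)}(\calD)$ to the axis has bounded diameter, and then use hyperbolicity to push $\phi_i^k(\sigma_{i+1})$ and $\phi_i^k(\iota_i(\sigma_{i+1}))$ far from $\calD$.

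The gap is exactly the step you flag as ``the main obstacle,'' and your proposed mechanism for it does not go through as stated. You assert that $\pi_{\Sigma_g-N(\sigma_i)}(\calD)$ is quasi-convex with proper limit set in $\partial\mathcal{AC}(\Sigma_g-N(\sigma_i))$, but neither claim is available off the shelf: quasi-convexity of the disc set in $\calC(\Sigma_g)$ does not pass to subsurface projections, and nothing in the setup tells you the limit set is proper. Without this, you cannot choose the axis endpoints to avoid it, and the rest of the argument does not start.

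The paper supplies this step by a completely different, concrete mechanism that uses the $I$--bundle structure $H_g=F\times[0,1]$ rather than any boundary/limit-set argument. Writing $X=F\times\{0\}$ and $Y=F\times\{1\}$, Corollary~\ref{cor:MasurSchleimer} says that every $c\in\calD$ satisfies $d_{\mathcal A(X)}(\pi_X(c),\tau(\pi_Y(c)))\leq 1$. One then conjugates $\phi_i$ (first so that every curve on its axis $\gamma$ cuts $\partial X$ and $\partial Y$, then by a large pseudo-Anosov supported on $X$) so that every curve $c$ on $\gamma$ has $d_{\mathcal{AC}(F)}(\pi_X(c),\pi_Y(c))$ very large. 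If some $c\in\calD$ projected near $\phi_i^k(\sigma_{i+1})$ in $\mathcal{AC}(\Sigma_g-N(\sigma_i))$, a geodesic from $c$ to $\gamma$ would avoid $\partial X$ and $\partial Y$, so by the Bounded Geodesic Image Theorem $\pi_X(c),\pi_Y(c)$ would be far apart in $\mathcal{AC}(F)$, contradicting Corollary~\ref{cor:MasurSchleimer}. (For $g=2$ the same idea works with the orientation double cover of $F$ and the ``moreover'' clause of Corollary~\ref{cor:MasurSchleimer}.) In short, the paper replaces your abstract limit-set avoidance by an explicit obstruction coming from the Masur--Schleimer constraint on disc projections; that is the missing idea in your proposal.
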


\begin{proof}
We give the proof for genus at least $3$ first.

Let $L,R$ be large enough constants to be determined later.

 Let $\phi_i$ be as in Lemma \ref{lem:product_twists}. Let $X=F\times\{0\}, Y=F\times\{1\}$. We can conjugate $\phi_i$ to ensure every curve along the axis $\gamma$ of $\phi_i$ cuts $\partial X$ and $\partial Y$. By conjugating $\phi_i$ by a large pseudo-Anosov of $X$, and keeping into account that the entire axis of $\gamma$ has bounded projection onto $\mathcal{AC}(X)$ and $\mathcal{AC}(Y)$, we can then ensure that, identifying $X,Y$ with $F$, for every curve $c$ on $\gamma$ we have $d_{\mathcal {AC}(F)}(\pi_X(c),\pi_Y(c))\geq R$.
 
 Notice that $\phi_i^k(\sigma_{i+1})$ has closest point projection to $\gamma$ far away from that of $\partial X$. If there was $c\in \calD$ so that $d_{\mathcal C(\Sigma_g-N(\sigma_i))}(c,\phi_i^k(\sigma_{i+1}))$ is small, then, by a simple Gromov-hyperbolicity argument, we would have a geodesic from $\pi_{\Sigma_g-N(\sigma_i))}(c)$ to $\gamma$ that stays far from $\partial X$ and $\partial Y$. Hence, $d_{\mathcal {AC}(F)}(\pi_X(c),\pi_Y(c))$ would be large, contradicting Corollary \ref{cor:MasurSchleimer}. A similar argument holds for $\phi_i^k(\iota_i(\sigma_{i+1}))$, which lies within uniformly bounded distance of $\phi_i^k(\sigma_{i+1})$.
 
\begin{figure}[h]
 \includegraphics[scale=0.7]{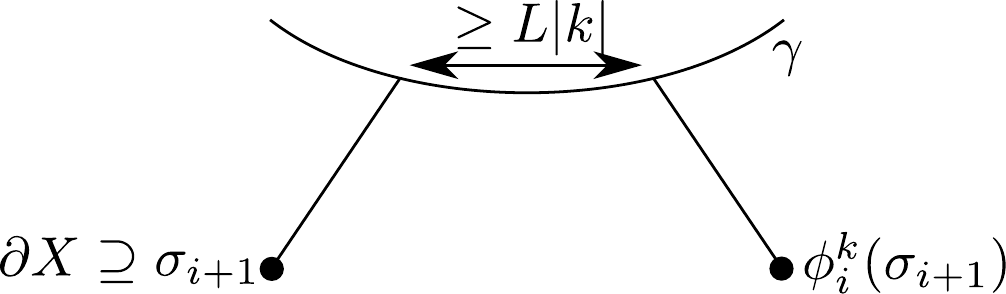}
 \caption{Picture in $\mathcal C(\Sigma_g-N(\sigma_i))$.}
\end{figure}

 For genus $2$, the proof is similar. In this case we let $X$ be the double cover of $F$ contained in $\partial H_2$. We can conjugate $\phi_i$ so that its axis has bounded projection onto $\mathcal{AC}(X)$. Furthermore, since the M\"obius strip with one disc removed has bounded curve graph, we can make sure that such projection lies far away from multicurves fixed by $\tau$, where $\tau$ is the involution described in Corollary \ref{cor:MasurSchleimer} (in the notation of the corollary, there is a natural correspondence between multicurves fixed by $\tau$ and multicurves of $F$). Finally, we can show that if the projection to $\mathcal{AC}(\Sigma_g-N(\sigma_i))$ of some $d\in\calD$ was close to either of $\phi_i^k(\sigma_{i+1})$ or $\phi_i^k(\iota_i\sigma_{i+1})$, then its projection to $X$ would be far away from the fixed set of $\tau$, contradicting Corollary \ref{cor:MasurSchleimer}.
 \end{proof}

 \subsection{Constructing steady paths}
 
 \begin{lemma}\label{lem:product}
 Fix a large enough $d$ and let $\phi_i$ be as in Lemma \ref{lem:rotate_off_disc_set}.
 
  Let $n\geq 2$ and let $k_1,\dots,k_n$ be nonzero integers. Let $\psi$ be the product, with indices modulo 2,
  $$\psi=(\phi^{k_1}_1\iota_1)(\phi^{k_2}_2\iota_2)\prod_{i=3}^{n}\phi^{k_i}_i.$$
  Then there exists a $(\calD,\psi(\calD),d)$--steady path $t_0,\dots,t_n$ with $t_i$ consisting of a single curve for $i\neq 0,n$. Moreover, $d_{\calC(\Sigma_g)}(\calD,\psi(\calD))=n+1$.
 \end{lemma}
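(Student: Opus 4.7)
The plan is to build the steady path by iterating the maps $\psi'_j$ (where $\psi'_j=\phi_j^{k_j}\iota_j$ for $j\in\{1,2\}$ and $\psi'_j=\phi_j^{k_j}$ for $j\geq 3$, all indices mod $2$) and tracking how they move the fixed curves $\sigma_0,\sigma_1$. Set $\psi_i := \psi'_1\cdots\psi'_i$ with $\psi_0=\mathrm{id}$, so $\psi_n=\psi$. Since $\psi'_j\in\mathrm{Stab}(\sigma_j)$, for each $1\leq i\leq n-1$ the curve $t_i := \psi_{i-1}(\sigma_i) = \psi_i(\sigma_i)$ is unambiguous, and consecutive curves $t_i=\psi_i(\sigma_i)$, $t_{i+1}=\psi_i(\sigma_{i+1})$ are images of the disjoint pair $\sigma_0,\sigma_1$ under the common homeomorphism $\psi_i$, hence disjoint. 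Take $t_0\subset\calD$ to be a maximal multicurve disjoint from $t_1=\sigma_1$, which is nonempty since $\partial(\alpha\times[0,1])\in\calD$ for any essential arc $\alpha\subset F$ avoiding $\partial_1 F$, and choose $t_n\subset\psi(\calD)$ analogously.

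Conditions (1), (2) and (4) of Definition \ref{defn:steadypath} are immediate from the construction. For (5), I would apply the isometry $\psi_i^{-1}$ on subsurface projections to reduce the required inequality to a statement about $(\psi'_i)^{-1}(\sigma_{i-1})$ and $\sigma_{i+1}$ in $\mathcal{AC}(\Sigma_g-N(\sigma_i))$. For interior indices, where $\psi'_i=\phi_i^{k_i}$, this is the displacement of $\sigma_{i-1}$ by a large power of a mapping class whose axis in $\mathcal{AC}(\Sigma_g-N(\sigma_i))$ has translation length $\geq L$ (Lemma \ref{lem:product_twists}); choosing $L$ large relative to $d$ yields the bound. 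For $i=1$ and $i=n-1$, the inequality reduces (possibly after also applying $\psi^{-1}$) to a projection estimate between $\calD$ and $\psi'_1(\sigma_0)$, or between $\psi^{-1}(t_n)\subset\calD$ and $(\psi'_n)^{-1}(\sigma_{n-1})$, in the relevant annular complement, which is precisely Lemma \ref{lem:rotate_off_disc_set}.

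Condition (3) is the most delicate. For $t_2=\psi'_1(\sigma_0)$ it follows directly from Lemma \ref{lem:rotate_off_disc_set}: any $d'\in\calD$ disjoint from $t_2$ either intersects $\sigma_1$, giving a well-defined projection $\pi_{\Sigma_g-N(\sigma_1)}(d')$ at distance $\leq 1$ from $t_2$ in $\mathcal{AC}(\Sigma_g-N(\sigma_1))$, or is disjoint from $\sigma_1$ and lies itself within distance $1$ of $t_2$ in that same arc-and-curve graph; both contradict the lemma's bound. For $i\geq 3$ I propagate the bound inductively: if $d'\in\calD$ were disjoint from $t_i$, then combining the large projections at every intermediate vertex $t_j$ with $2\leq j\leq i-1$ (already established) with the Bounded Geodesic Image Theorem forces $d'$ to have controlled projections at each $t_j$, and conjugating by $\psi_{i-1}^{-1}$ reduces the resulting estimate to a Lemma \ref{lem:rotate_off_disc_set}--type bound in the transported coordinate.

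Finally, the Heegaard distance claim follows from Lemma \ref{lem:concat_large_angle}: for any $c_0\in t_0$ and $c_n\in t_n$, the sequence $c_0,t_1,\dots,t_{n-1},c_n$ has consecutive disjoint vertices and satisfies the large-projection condition at every interior vertex, so any geodesic in $\mathcal C(\Sigma_g)$ from $c_0$ to $c_n$ contains all the $t_j$, yielding the announced value for $d_{\mathcal C(\Sigma_g)}(\calD,\psi(\calD))$. The principal obstacle is the propagation argument for condition (3) at interior $t_i$: one must verify that conjugation by $\psi_{i-1}^{-1}$ transports both the disc set $\calD$ and the subsurface $\Sigma_g-N(t_i)$ in a way compatible with the hypotheses of Lemma \ref{lem:rotate_off_disc_set}.
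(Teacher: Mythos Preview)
Your construction of the $t_i$ and the checks of conditions (1), (2), (4), and (5) agree with the paper's. The substantive issue is condition (3) for $i\geq 3$, and the obstacle you flag is genuine: conjugation by $\psi_{i-1}^{-1}$ does \emph{not} carry $\calD$ to $\calD$, so after transporting you have no disc-set hypothesis left to feed into Lemma~\ref{lem:rotate_off_disc_set}. That reduction cannot be made to work.

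The paper avoids this entirely by staying at $t_1=\sigma_1$ rather than moving to $t_{i-1}$. The point is that for \emph{every} $d'\in\calD$ one already has the large-projection estimate $d_{\mathcal{AC}(\Sigma_g-N(t_1))}(d',t_2)\geq d$ directly from Lemma~\ref{lem:rotate_off_disc_set} (since $t_1=\sigma_1$ and $t_2=\phi_1^{k_1}\iota_1(\sigma_2)$). Combined with condition (5) at $t_2,\dots,t_{i-1}$, this is exactly the input to Lemma~\ref{lem:concat_large_angle} (its proof only uses nonempty projections and the large-link inequalities, not literal disjointness of $c_0$ from $c_1$), and one concludes that every geodesic in $\calC(\Sigma_g)$ from $d'$ to $t_i$ passes through $t_1,\dots,t_{i-1}$. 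Hence $d_{\calC(\Sigma_g)}(d',t_i)\geq 2$, i.e.\ $d'$ intersects $t_i$. No inductive transport of $\calD$ is needed; the same argument also gives the Heegaard-distance claim in one stroke.

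Two smaller remarks. Your case split for condition (5) misses $i=2$, where $\psi'_2=\phi_2^{k_2}\iota_2$ carries the extra $\iota_2$; this is handled by the same translation-length bound after absorbing the fixed quantity $d_{\mathcal{AC}(\Sigma_g-N(\sigma_2))}(\sigma_1,\iota_2(\sigma_1))$ into the choice of $L$. Also, with $n$ factors one gets $n$ interior curves and a path $t_0,\dots,t_{n+1}$ of length $n+1$, matching the asserted distance; the labelling $t_0,\dots,t_n$ in the statement is an off-by-one slip that the paper's own proof silently corrects.
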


\begin{proof}
Let $\phi'_1=\phi^{k_1}_1\iota_1,\phi'_2=\phi^{k_2}_2\iota_2$ and $\phi'_i=\phi^{k_i}_i$ for $i\geq 3$.

For $k\geq 3$, let $\psi_k=\prod_{i=1}^{k}\phi'_i$. Let $t_i=\psi_{i-1}\sigma_i=\psi_i\sigma_i$ for $i=1,\dots,n$. Also, let $t_0\subseteq \calD$ (resp. $t_{n+1}\subseteq \psi(\calD)$) be a maximal collection of pairwise disjoint curves disjoint from $t_1$ (resp. $t_n$).

All conditions of Definition \ref{defn:steadypath} except for item \ref{item:filling} can be easily verified directly. To verify item \ref{item:filling} we just need to observe that, provided $d$ is large enough, for every $d\in\calD$, every geodesic from $d$ to $t_i$, $i\geq 2$, contains $t_1,\dots,t_{i-1}$ by Lemma \ref{lem:concat_large_angle}, and similarly for $d'\in\psi(\calD)$. Also, any geodesic from $\calD$ to $\psi(\calD)$ contains $t_1,\dots,t_n$, proving $d_{\calC(\Sigma_g)}(\calD,\psi(\calD))=n+1$ (since $d_{\calC(\Sigma_g)}(\calD,\psi(\calD))\leq n+1$ because $t_0,\dots,t_{n+1}$ form a path).
\end{proof}

\subsection{Proof of Theorem \ref{thm:manyhaken}}

Fix $g,n$ and $k$ as in the statement of the theorem. By Lemma \ref{lem:product}, we can construct a Heegaard splitting $H(\psi)$ so that the two disc sets $\calD_0,\calD_1$ in $\calC(\Sigma_g)$ are at distance exactly $n$, and they are connected by a $(\calD_0,\calD_1,5)$--steady path with $t_1,t_{n-1}$ consisting of a single curve. Hence the resulting manifold $M(\psi)$ is Haken by Theorem \ref{thm:steady_incompressible} and Lemma \ref{lem:Euler} (notice that if a possibly disconnected surface does not admit a compression disc then none of its components do). Moreover $\psi=\iota \phi$ for some $\phi\in \mathcal K$ (since $\mathcal K$ is normal), so that $M(\psi)$ is an integer homology sphere and by choosing the exponents $k_i$ in Lemma \ref{lem:product}, we can make sure that $J(\phi)=k$, i.e. that the Casson invariant of $M(\psi)$ is $k$.

Since the Heegaard splitting has distance at least 3, the resulting manifold is hyperbolic by results in \cite{Hempel:Heegaard} and Thurston's hyperbolisation.\qed

  \bibliographystyle{alpha}
 \bibliography{biblio.bib}

\end{document}